\renewcommand{\geq}{\geqslant}
\newtheorem*{theorem*}{Theorem}
\newtheorem*{defin}{Definition}
\newtheorem*{lem}{Lemma}
\theoremstyle{definition}
\theoremstyle{remark}
\theoremstyle{definition}
\begin{document}

\title{The connection between Arrow theorem and Sperner lemma}
\author{Nikita Miku \footnote{HSE University, Email: {\tt miku.endomorphism@gmail.com}.}}
\date{}

\maketitle

\begin{abstract}

It is well known that Sperner lemma is equivalent to Brouwer fixed-point theorem. Tanaka \cite{T1} proved that Brouwer theorem is equivalent to Arrow theorem, hence Arrow theorem is equivalent to Sperner lemma. In this paper we will prove this result directly. Moreover, we describe a number of other statements equivalent to Arrow theorem.
\end{abstract}

\section{Introduction}\label{sec1}

Topological Social Choice Theory begins with the idea that individuals linear preferences on $\mathbb{R}_{+}^2$ can be interpreted as indifference curves, or as a vector of unit length. This approach allows to construct individual preferences as points in $S^1$. A \emph{social choice function} (SCF) for two individuals $f: S^1 \times S^1 \rightarrow S^1$ is aggregation rule for individual preferences $\left(\theta_1, \theta_2\right) \in S^1 \times S^1$ into social preference $f\left(\theta_1, \theta_2\right) \in S^1$. In 1979 Chichilinsky \cite{C1} proved that following axioms on the SCF for only two individuals are inconsistent: (1) SCF is continuous, (2) SCF is invariant under permutation of its arguments (anonymity), and (3) SCF respects unanimity, i.e. if all the individuals have the same preference, the social preference is the same as theirs. This theorem can easily be generalized to spheres $S^m$, where $1 < m < \infty$, and also to any finite numbers of individuals (see \cite{C2} and \cite{C3}). See \cite{L} for similar results. 

Baryshnikov \cite{B} presented an excellent topological proof of Arrow theorem. After that, Tanaka \cite{T1} proved the equivalence between Brouwer fixed-point theorem and Arrow theorem. In other words, Baryshnikov's proof is equivalent to the proof of Brouwer theorem. It is well known that Sperner lemma is equivalent to Brouwer theorem (e.g. see \cite{Y}). Therefore, Sperner lemma is equivalent to Arrow theorem, which we will prove this directly in this paper. The same result can be reached by another chain of reasoning: Gale \cite{G} proved that HEX-theorem is equivalent to Brouwer theorem and Tanaka \cite{T2} proved the equivalence between HEX-theorem and Arrow theorem.

To show how our proof works, we first prove Arrow theorem. After that, we will show that Arrow theorem follows from Sperner lemma, while Sperner lemma follows from Arrow theorem, which means that they are equivalent.

\section{Arrow Theorem}

\subsection{Preliminaries}

Let $N = \{1, 2, ..., n\}$ consider a society with $n$ ($1 < n < \infty$) individuals. Let $X = \{a, b, c, ...\}$ be a set of distinct alternatives. Let $\mathcal{P}$ be the set of all possible complete and transitive preference relations $a \succ_{i} b$ (\emph{individual preference for i's individual}) over $X$. A relation $a \succ b$ with the same properties will be called as \emph{social preference}. The elements of $\mathcal{P}$ will be denoted by $(\succ_{1})$, $\ldots$, $(\succ_{n})$, and will be called a \emph{profiles of individual preferences}. Let $\mathcal{T}$ be a non-empty set of social ranking, i. e. $\mathcal{T}$ is ordered set $X$ with social preference $\succ$.  A function $f: \mathcal{P} \longrightarrow \mathcal{T}$ will be \emph{social choice rule}. Let's formulate three axioms for social choice function:

\begin{enumerate}
    \item \textbf{Pareto.} If for any alternatives \(a, b \in X\) holds \(a \succ_{i} b\) for any individual \(i \in N\), therefore social preference must be \(a \succ b\);
    \item \textbf{Independence of Irrelevant Alternatives.} For any subset of alternatives \(Y \subset X\) holds equality \(f(\mathcal{P}) \setminus Y = f(\mathcal{P} \setminus Y)\);
    \item \textbf{Non-dictatorship.} There does not exist profile of individual preferences $(\succ_{i})$ with $i \in N$ such that $(\succ_{i})$ = $f(\mathcal{P})$. We will call an individual with such a profile a \emph{dictator}.
\end{enumerate}

Now, Arrow's impossibility result:

\begin{theorem*}[Arrow \cite{A}]
If \(|X| \geq 3\), then there does not exist any social choice rule which simultaneously satisfies Pareto, Independence of Irrelevant Alternatives, and Non-dictatorship.
\end{theorem*}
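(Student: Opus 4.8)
The plan is to set topology aside on this first pass and run the classical \emph{decisive-coalition} argument, whose output is precisely the combinatorial structure the rest of the paper will relate to Sperner's lemma. Call a coalition $G \subseteq N$ \emph{decisive over the ordered pair} $(a,b)$ of distinct alternatives if, in every profile in which each member of $G$ ranks $a \succ_i b$, the resulting social ranking satisfies $a \succ b$, no matter how the individuals outside $G$ rank the alternatives; call $G$ \emph{decisive} if it is decisive over every ordered pair. In this language \textbf{Pareto} says exactly that the grand coalition $N$ is decisive, while \textbf{Non-dictatorship} says that no singleton $\{i\}$ is decisive. Thus it suffices to prove that every decisive $G$ with $|G| \geq 2$ contains a proper nonempty subcoalition that is again decisive: starting from $N$ and iterating — which must terminate, since $N$ is finite — one reaches a decisive singleton, i.e.\ a dictator, contradicting Non-dictatorship. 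I would also record at the outset how \textbf{IIA} enters: by independence the social ranking of a pair $\{a,b\}$ depends only on the individuals' rankings of $\{a,b\}$, so a single profile witnessing ``everyone in $G$ has $a \succ_i b$, everyone outside has $b \succ_i a$, society has $a \succ b$'' already forces the same conclusion $a \succ b$ in every profile with that $\{a,b\}$-pattern; I will call $G$ \emph{almost decisive over} $(a,b)$ in that situation.

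The first lemma is \textbf{field expansion}: if $G$ is almost decisive over some ordered pair, then $G$ is decisive over all pairs. This is where $|X| \geq 3$ is indispensable. Given a third alternative $c$, I build a profile in which every member of $G$ ranks $a \succ_i b \succ_i c$ while everyone outside $G$ places $b$ at the top (say with $c$ above $a$); then $a \succ b$ socially because $G$ is almost decisive over $(a,b)$, $b \succ c$ socially by Pareto, hence $a \succ c$ socially by transitivity, and since in this profile $G$ is exactly the set preferring $a$ to $c$, $G$ is almost decisive over $(a,c)$. A second insertion of $b$ together with a reading-off of $\{a,c\}$ via IIA upgrades ``almost decisive over $(a,c)$'' to ``decisive over $(a,c)$''; permuting the roles of $a,b,c$ (and bringing in a fourth alternative when one exists) then propagates full decisiveness to every ordered pair.

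The second lemma is \textbf{contraction}: if $G$ is decisive and $|G| \geq 2$, write $G = G_1 \sqcup G_2$ with both parts nonempty, pick distinct $a,b,c$, and take the profile in which $G_1$ ranks $a \succ b \succ c$, $G_2$ ranks $c \succ a \succ b$, and everyone outside $G$ ranks $b \succ c \succ a$. All of $G$ prefers $a$ to $b$, so decisiveness of $G$ gives $a \succ b$ socially; now look at how society ranks $a$ against $c$. If $a \succ c$, then $G_1$ is exactly the set preferring $a$ to $c$ here, so $G_1$ is almost decisive over $(a,c)$; otherwise $c \succeq a$, which with $a \succ b$ forces $c \succ b$ socially, and $G_2$ is exactly the set preferring $c$ to $b$, so $G_2$ is almost decisive over $(c,b)$. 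Either way field expansion promotes the relevant half of $G$ to a decisive coalition, which completes the induction and hence the theorem.

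The step I expect to carry the real weight is the bookkeeping inside these two lemmas: one must exhibit preference relations that are genuinely complete and transitive, verify that the individuals whose rankings are left unspecified cannot disturb any conclusion (this is exactly what the two invocations of IIA buy), and check that the case split in the contraction lemma is exhaustive using only completeness of the social order (handling a possible social tie $c \sim a$, if ties are allowed). I would close with the remark that the family of decisive coalitions is then closed under supersets and finite intersections, and a short additional argument shows it contains exactly one of each coalition and its complement — so it is an ultrafilter on $N$, necessarily principal since $N$ is finite. That ultrafilter reformulation is the packaging of Arrow's theorem most directly comparable with the combinatorial statements, such as Sperner's lemma, treated in the following sections.
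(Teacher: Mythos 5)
Your proposal is correct and follows essentially the same route as the paper: the Sen-style argument via a field-expansion lemma and a group-contraction lemma, iterated from the grand coalition $N$ down to a decisive singleton, contradicting Non-dictatorship. Your write-up is in fact slightly more careful than the paper's (you distinguish almost-decisiveness from decisiveness and fully specify the outsiders' rankings in the contraction profile), but the decomposition and the key lemmas are the same, so no substantive comparison is needed.
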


\subsection{Proof of Arrow Theorem}

The main idea is to show some individual in society must be a dictator, when Pareto and Independence of Irrelevant Alternatives are satisfied for social choice rule. Hence, the condition of Non-dictatorship fails whenever the others are accepted. Therefore, all three axioms are simultaneously inconsistent, which proves the Arrow theorem. Note that the proof used here is consistent with the one presented in \cite{S}.

\begin{defin}[Decisive set]
We say that a set $D \subset N$ is \emph{$(a, b)$-decisive} for alternatives $a, b \in X$ when any time holds $a \succ_{i} b$ for any individual $i \in D$, the social preference also has $a \succ b$. We say $D$ is \emph{decisive} if $D$ is $(a, b)$-decisive for any alternatives $a, b \in X$.
\end{defin}

It is easy to see that a decisive singleton set contain a dictator. Our goal is to show that singleton deceive set always exists.

\begin{lem}[Field-Expansion]
\label{lem:field-expansion}
Any $(a,b)$-decisive set $D$ is decisive.
\end{lem}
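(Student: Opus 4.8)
The plan is to leverage Pareto and Independence of Irrelevant Alternatives (IIA) to "spread" decisiveness from the single ordered pair $(a,b)$ to all ordered pairs. The standard device is to introduce a fresh alternative $c \in X$ (available since $|X|\geq 3$) and engineer a profile in which the individuals in $D$ rank $a \succ_i b \succ_i c$ while everyone outside $D$ ranks $b$ above both $a$ and $c$ in some harmless way, say $b \succ_j a$ and $b \succ_j c$ with $a,c$ ordered arbitrarily. Then I would read off two things from the social ranking: since $D$ is $(a,b)$-decisive, we get $a \succ b$ socially; since everyone (in $D$ and outside $D$) puts $b \succ_i c$, Pareto gives $b \succ c$ socially; transitivity of the social preference then forces $a \succ c$. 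The key point is that the only group expressing a fixed opinion about the pair $\{a,c\}$ that we controlled was $D$ (the outsiders were deliberately split), so by IIA the social verdict $a \succ c$ cannot depend on anything but the restriction of the profile to $\{a,c\}$, and hence $D$ must be $(a,c)$-decisive.

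From there the remaining steps are bookkeeping: iterate the same trick with the roles permuted to climb up the chain of implications $(a,b)\text{-decisive} \Rightarrow (a,c)\text{-decisive} \Rightarrow (b,c)\text{-decisive} \Rightarrow (b,a)\text{-decisive} \Rightarrow \dots$, and observe that after finitely many such moves one obtains $(x,y)$-decisiveness for every ordered pair $x,y$ drawn from $\{a,b,c\}$. Then, because $|X|\geq 3$ but $X$ may be larger, I would note that any ordered pair $(x,y)$ of distinct alternatives in $X$ can be inserted into a three-element "bridge" $\{x, z, y\}$ with $z$ chosen distinct from both, and rerun the argument so that $(a,b)$-decisiveness propagates first to the bridge and then to $(x,y)$. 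Collecting all pairs yields that $D$ is decisive in the full sense of the definition.

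The main obstacle I anticipate is making the IIA step airtight. One has to be careful that IIA as stated here — $f(\mathcal P)\setminus Y = f(\mathcal P\setminus Y)$, i.e. the social ranking restricted to a sub-menu agrees with the social ranking computed from the restricted profile — is exactly what licenses the inference "the social order on $\{a,c\}$ depends only on individuals' orders on $\{a,c\}$." Concretely, one must verify that the two profiles used (the engineered one, and any profile agreeing with it only on the pair $\{a,c\}$) produce the same social comparison of $a$ and $c$, which requires restricting to $Y$ equal to the complement of $\{a,c\}$ and checking the definitions line up. A secondary subtlety is ensuring the engineered profiles are genuine elements of $\mathcal P$, i.e. that the stipulated individual rankings are complete and transitive; this is routine but must be stated. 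Once the IIA bookkeeping is pinned down, the transitivity-plus-Pareto core of the argument is short.
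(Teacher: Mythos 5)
Your proof is correct, but it is organized differently from the paper's. The paper establishes $(x,y)$-decisiveness in a single step: for arbitrary $x,y$ it engineers one profile with $x \succ_i a \succ_i b \succ_i y$ for every $i \in D$ and $x \succ_j a$, $b \succ_j y$ for everyone else, then applies $(a,b)$-decisiveness, Pareto on both flanks, transitivity, and IIA all at once; the price is the (unwritten) case analysis when $\{x,y\}$ meets $\{a,b\}$, which the paper dismisses as ``repetitions of essentially the same strategy.'' You instead run the same engine --- decisive pair, Pareto, transitivity, IIA --- one fresh alternative at a time, propagating decisiveness around the triple $\{a,b,c\}$ and then reaching an arbitrary pair $(x,y)$ through overlapping triples. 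Your version never needs four distinct alternatives in one profile and absorbs the coincidence cases into the choice of triples, at the cost of the chaining bookkeeping; the one place to tighten is the bridging step, since the bridge $\{x,z,y\}$ must share an already-conquered pair with what came before (e.g.\ take $z \in \{a,b\}$, first pass through the triple $\{a,b,x\}$ to get a pair such as $(a,x)$, then through $\{a,x,y\}$ to get $(x,y)$), which your phrase ``propagates first to the bridge'' glosses over. You are also more explicit than the paper about why IIA licenses the final inference (comparing the engineered profile with an arbitrary profile agreeing on the pair in question), which is a genuine improvement in rigor over the paper's one-line appeal to IIA.
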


\begin{proof}
    Let $D$ is $(a,b)$-decisive set for some alternatives $a, b \in X$, and let $x$ and $y$ be some arbitrary alternatives from $X$. We will show that $D$ is $(x,y)$-decisive set. Consider the case in which $x \succ_{i} a \succ_{i} b \succ_{i} y$ for any $i \in D$, while $x \succ_{j} a$ and $b \succ_{j} y$ for any $j \in N \setminus D$ (with nothing being presumed about the ranking of the other pairs). Since we assumed that D is $(a, b)$-decisive, social preference must be $a \succ b$. By Pareto, we find that social preferences will be $x \succ a$ and $b \succ y$. The social preference is thus as follows: $x \succ a \succ b \succ y$, and because of transitivity, it follows that $x \succ y$. By the Independence of Irrelevant Alternatives, $x \succ y$ must depend on individual preferences only over $x$ and $y$. Hence, by assuming that $D$ is $(a,b)$-decisive set, we have shown that society must as a consequence of this prefer $x \succ y$. Social preference is $x \succ y$ because $x \succ_{i} y$ for any $i \in D$. Hence, according to the definition of decisive set, $D$ is $(x, y)$-decisive set, which is what we wanted to show. The demonstration proceeds by repetitions of essentially the same strategy in different possible cases.
\end{proof}

\begin{lem}[Group-Contraction]
    If set $D$ is decisive and is not a singleton, hence there exist deceive set $D' \subset D$ such that $|D'| < |D|$.    
\end{lem}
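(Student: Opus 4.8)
The plan is to take a decisive set $D$ with $|D| \geq 2$, split it into two non-empty pieces $D = D_1 \sqcup D_2$, and construct a profile on three alternatives $a, b, c$ that forces one of the pieces to be decisive over some pair. First I would partition $D$ arbitrarily into disjoint non-empty $D_1$ and $D_2$, and consider the profile in which individuals in $D_1$ rank $a \succ_i b \succ_i c$, individuals in $D_2$ rank $c \succ_i a \succ_i b$, and everyone in $N \setminus D$ ranks $b \succ_i c \succ_i a$ (the rankings outside $D$ chosen so they never help). Since $D = D_1 \cup D_2$ and every member of $D$ has $a \succ_i b$, decisiveness of $D$ forces the social preference $a \succ b$.

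Next I would examine where $c$ lands in the social ranking relative to $a$ and $b$. By completeness and transitivity of the social preference, either $c \succ b$ or $b \succ c$ (and similarly for $a$), so I split into cases. Case 1: if $b \succ c$ socially, then combined with $a \succ b$ we can focus on the pair $b$ versus $c$: the only individuals ranking $b \succ_i c$ are those in $D_1$ together with $N \setminus D$, but the individuals in $N \setminus D$ are irrelevant to whether $D_1$ is decisive here — more carefully, among members of $D$ it is exactly $D_1$ who has $b \succ_i c$, and by a Field-Expansion-type argument (invoking Lemma~\ref{lem:field-expansion}) one upgrades this to $D_1$ being $(b,c)$-decisive, hence decisive. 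Case 2: if $c \succ b$ socially, then since $a \succ b$ is already forced, transitivity with $c \succ b$ doesn't immediately chain, so instead I look at the pair $a$ versus $c$: combining $c \succ b$ with $a \succ b$ is not enough, so I'd rerun the argument noting that the members of $D$ with $c \succ_i a$ are exactly $D_2$, and if the social preference has $c \succ a$ we conclude $D_2$ is $(c,a)$-decisive, hence decisive; if instead $a \succ c$ socially, then $a \succ c$ together with... — here one must be careful, and the cleanest route is to observe that by completeness the social order on $\{a,b,c\}$ must put something first, and whichever way it breaks, it exhibits either $D_1$ or $D_2$ as a decisive-over-some-pair set.

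In either case the resulting decisive subset is $D_1$ or $D_2$, each a proper subset of $D$ of strictly smaller cardinality, giving the desired $D' \subsetneq D$ with $|D'| < |D|$; the only subtlety is ensuring $D'$ is non-empty, which holds because we chose the partition with both pieces non-empty. The main obstacle I expect is the bookkeeping in the case analysis: one has to arrange the profile of the outside voters $N \setminus D$ and the internal rankings so that in every branch of the social order on $\{a,b,c\}$ the set of $D$-members favoring the winning pair is entirely contained in $D_1$ or entirely in $D_2$, and then invoke Pareto and Independence of Irrelevant Alternatives to isolate that pair — but this is exactly the standard "contagion" construction, and Lemma~\ref{lem:field-expansion} does the heavy lifting of turning a single decisive pair into full decisiveness.
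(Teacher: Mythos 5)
Your overall architecture (partition $D$, build a three-alternative profile, case-split on the social ranking, then finish with Lemma~\ref{lem:field-expansion}) is the same ``contagion'' strategy as the paper's, but the execution has a genuine gap: you fully specify everyone's preferences, including those of $N \setminus D$ and of the other cell of the partition, and this destroys the mechanism that turns a case conclusion into decisiveness. Under the paper's definition, $D_1$ is $(b,c)$-decisive only if the social preference is $b \succ c$ whenever all members of $D_1$ prefer $b$ to $c$, \emph{for arbitrary} preferences of everyone else over $b$ and $c$. In your profile every voter in $N \setminus D$ also has $b \succ_i c$, so from ``socially $b \succ c$ in this profile'' you have only learned what happens when the outsiders unanimously back $D_1$; the social ranking of $\{b,c\}$ may well flip when they do not, and Independence of Irrelevant Alternatives gives you no way to rule that out. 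Your remark that the outsiders are ``irrelevant'' and that a ``Field-Expansion-type argument'' discounts them is exactly the step that fails: Lemma~\ref{lem:field-expansion} spreads decisiveness \emph{over a single pair} to all pairs, but it takes pair-decisiveness as input and cannot shrink the coalition of supporters. The same defect recurs in your Case 2 (outsiders also have $c \succ_i a$), and your final sub-case is left unresolved (``whichever way it breaks\dots''), so the case analysis does not actually close.

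The paper avoids this by specifying only \emph{partial} rankings: members of $D'$ are assumed to have $a \succ_i b$ and $a \succ_i c$, members of $D''$ to have $a \succ_j b$ and $c \succ_j b$, and nothing is assumed about anyone's ranking of the test pairs $(a,c)$ and $(c,b)$ outside the relevant cell. Then: if socially $a \succ c$, only $D'$ was ever assumed to rank $a$ above $c$, so $D'$ is $(a,c)$-decisive; otherwise $c \succeq a$, which with $a \succ b$ (from decisiveness of $D$) and transitivity forces $c \succ b$, and only $D''$ was assumed to rank $c$ above $b$, so $D''$ is $(c,b)$-decisive; Field-Expansion then finishes. To repair your proof you would have to restructure the profile the same way — leave the test-pair rankings of $N \setminus D$ (and of the non-candidate cell, and the irrelevant within-cell comparisons) unspecified — rather than pinning them down and appealing to Field-Expansion to ignore them.
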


\begin{proof}
Consider a decisive set $D$ and partition it into two sets $D^{\prime}$ and $D^{\prime \prime}$. Let holds $a \succ_{i} b$ and $a \succ_{i} c$ for any individual $i \in D'$, and let holds $a \succ_{j} b$ and $c \succ_{j} b$ for any individual $j \in D''$. Only two cases are possible. First, if social preference is $a \succ c$, then set $D^{\prime}$ is $(a, c)$-decisive. Second case, $D^{\prime}$ is not $(a, b)$-decisive only if $c \succeq a$ for some $N \setminus D^{\prime}$. Obviously, $a \succ b$ because $D$ is decisive. Social preference must be $c \succ b$ because it follows by transitivity from $c \succeq a$ and $a \succ b$. However, all we assumed above was that only the individuals $j \in D^{\prime \prime}$ had to prefer $c \succ_{j} b$. Hence, since we were able to conclude that social preference must be $c \succ b$ without making any further assumptions, set $D^{\prime \prime}$ is $(c, b)$-decisive. By Field-Expansion lemma, in first case set $D'$ is decisive, and in second case $D''$ is also decisive. So either $D'$ or $D''$ must be decisive. Hence, if $D$ is decisive, there clearly will be some smaller subset of $D$ that is also decisive.
\end{proof}

Now we are ready to prove Arrow theorem. Observe that set $N$ is $(a,b)$-decisive for any two alternatives by Pareto axiom, therefore $N$ is decisive by Field-Expansion lemma. By Group-Contraction lemma, set $\{i\}$ is decisive or set $N \setminus \{i\}$ is decisive. If first case holds, $\{i\}$ is dictator. If $N \setminus \{i\}$ is decisive set, we can repeat this procedure over and over again (to a smaller decisive set). We will get a set $\bigcap_{i=1}^{n-1}N \setminus \{i\} = N \setminus\{1\} \cap \cdots \cap N \setminus\{n-1\}=\{n\}$, where $n$ is dictator. Theorem is proved.

\section{Sperner Lemma}

Let $\Delta^n$ be a $n$-dimensional simplex with vertices $v_1, \ldots, v_{n+1}$. Let $T$ be a triangulation of $\Delta^n$. Suppose that each vertex of $T$ is assigned a unique label from the set $\{1,2, \ldots, n+1\}$. A labelling is called \emph{Sperner's} if the vertices are labelled in such a way that a vertex of $T$ belonging to the interior of a face of $\Delta^n$ can only be labelled by $k$ if $v_k$ is on $\Delta^n$. In other words, Sperner lemma is a statement about coloring of triangulated simplices.

\begin{theorem*}[Sperner Lemma \cite{Sperner}] Every Sperner labelling of a triangulation of a $n$-dimensional simplex contains a cell labelled with a complete set of labels (proper coloring): $\{1,2, \ldots, n+1\}$.
\end{theorem*}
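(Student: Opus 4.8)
The plan is to prove a slightly stronger statement by induction on the dimension $n$: every Sperner labelling of a triangulation $T$ of $\Delta^n$ contains an \emph{odd} number of fully-labelled cells, so in particular at least one. The base case $n=0$ is immediate, since $\Delta^0$ is a single vertex which must receive the label $1$. (One may equally start from $n=1$: reading a triangulated segment $[v_1,v_2]$ from the end labelled $1$ to the end labelled $2$, the label must switch an odd number of times, and each switch is a subinterval labelled $\{1,2\}$.)

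For the inductive step, assume the claim in dimension $n-1$. Call an $(n-1)$-dimensional face $\tau$ of an $n$-cell of $T$ a \emph{door} if the labels on its $n$ vertices are exactly $\{1,2,\ldots,n\}$. I would count, modulo $2$, the set of incident pairs $(\sigma,\tau)$ where $\sigma$ is an $n$-cell of $T$ and $\tau$ is a door which is a face of $\sigma$, evaluating this count in two ways. Summing over cells: a direct case check on the multiset of $n+1$ labels of an $n$-cell $\sigma$ shows that $\sigma$ contains exactly one door if its labels are precisely $\{1,\ldots,n+1\}$ (namely the facet opposite the vertex labelled $n+1$), exactly two doors if its labels are precisely $\{1,\ldots,n\}$ with one repetition (so $n+1$ absent), and no door otherwise; hence the number of incident pairs is congruent modulo $2$ to the number of fully-labelled cells. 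Summing over doors: a door interior to $\Delta^n$ is a face of exactly two $n$-cells, while a door on $\partial\Delta^n$ is a face of exactly one. A boundary door lies in some facet $F_k$ of $\Delta^n$ (the facet opposite $v_k$); by the Sperner condition no vertex of $F_k$ may carry the label $k$, so a door — which uses all of $1,\ldots,n$ — can only lie in $F_{n+1}=[v_1,\ldots,v_n]$. That facet carries the induced triangulation together with the restricted labelling, which is again a Sperner labelling of an $(n-1)$-simplex whose fully-labelled cells are exactly the boundary doors; by the induction hypothesis their number is odd. Combining the two counts yields that the number of fully-labelled $n$-cells of $T$ is odd, hence nonzero.

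I expect the main obstacle to be the careful combinatorial bookkeeping rather than any deep idea: one must verify rigorously the ``zero, one, or two doors'' trichotomy for a single cell, justify that the only boundary doors sit on the single facet $F_{n+1}$ using the defining property of a Sperner labelling, and remember to carry the strengthened hypothesis (``odd'', not merely ``exists'') through the induction, since it is exactly the parity that feeds the boundary count. An alternative packaging of the same argument avoids explicit double counting: form a graph whose nodes are the $n$-cells of $T$ together with one extra node representing the exterior, joining two cells that share a door and joining a cell to the exterior node once for each of its boundary doors; every cell-node then has degree in $\{0,1,2\}$, a node has degree $1$ precisely when it is a fully-labelled cell, and the exterior node has odd degree by induction, so the handshake lemma forces another odd-degree node. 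I would present the double-counting version as the cleanest exposition, with this graph formulation mentioned as a remark.
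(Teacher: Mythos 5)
Your argument is correct, and it is the standard inductive door-counting proof of Sperner's lemma: the trichotomy (a cell has one door iff it is fully labelled, two doors iff its labels are $\{1,\ldots,n\}$ with one repetition, and none otherwise), the observation that boundary doors can only lie in the facet $F_{n+1}$ because a vertex on the facet opposite $v_k$ may not carry label $k$, and the parity strengthening carried through the induction are all exactly what is needed, and the handshake-lemma packaging you mention is an equivalent reformulation. It differs from the paper, however, in that the paper never proves Sperner's lemma directly at all: it states the lemma, refers the reader to \cite{Cohen} for a proof, and the only ``proof'' it offers of its own is the informal derivation in Section 4.2, where a ``social choice game'' on a Sperner-labelled simplex is set up and Arrow's theorem is invoked to produce a dictator who single-handedly colours a fully-labelled cell. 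That derivation is part of the paper's equivalence narrative (Arrow $\Leftrightarrow$ Sperner) rather than a self-contained combinatorial argument, and as written it leaves the correspondence between preference profiles, colourings, and cells largely unspecified. Your proof buys self-containedness, full rigour, and the stronger conclusion that the number of fully-labelled cells is odd (which the paper's route does not yield); the paper's route, to the extent it works, buys only the thematic link to social choice while resting on the external citation for the actual mathematics. So your proposal is a valid and in fact more complete proof than anything in the paper, just not the route the paper takes.
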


\begin{center}
  \begin{tikzpicture}
\fill[gray!30!white] (-3,0)--(-2.25,1.3)--(0,0)--cycle;
\fill[gray!30!white] (1.5,2.6)--(3,0)--(1.5,1)--cycle;
\fill[gray!30!white] (1.5,2.6)--(0,0)--(1.5,1)--cycle;
\draw (-3,0) node[below left]{2}--(0,0) node[below]{3};
\draw (0,0)--(3,0) node[below right]{3};
\draw (3,0)--(1.5,2.6) node[right]{1};
\draw (1.5,2.6)--(0,5.2) node[above]{1};
\draw (1.5,2.6)--(0,0);
\draw (1.5,2.6)--(1.5,1) node[below]{2} 
			node[left,midway]{} node[right,midway]{};
\draw (0,0)--(1.5,1);
\draw (3,0)--(1.5,1);
\draw (-3,0)--(-2.25,1.3) node[left]{1}
			node[left,midway]{} node[right,midway]{};
\draw (-2.25,1.3)--(-1.5,2.6) node[left]{1};
\draw (-1.5,2.6)--(-0.75,3.9) node[left]{2}
			node[left,midway]{} node[right,midway]{};
\draw (-0.75,3.9)--(0,5.2)
			node[left,midway]{} node[right,midway]{};
\draw (-2.25,1.3)--(0,0);
\draw (-1.5,2.6)--(0,0);
\draw (-0.75,3.9)--(1.5,2.6)
			node[above,midway]{} node[below,midway]{};
\draw (-1.5,2.6)--(1.5,2.6);
\fill[red] (0,5.2) circle (2pt);
\fill[green] (-0.75,3.9) circle (2pt);
\fill[red] (-1.5,2.6) circle (2pt);
\fill[red] (-2.25,1.3) circle (2pt);
\fill[green] (-3,0) circle (2pt);
\fill[blue] (0,0) circle (2pt);
\fill[blue] (3,0) circle (2pt);
\fill[green] (1.5,1) circle (2pt);
\fill[red] (1.5,2.6) circle (2pt);
\end{tikzpicture}
\begin{center}
2-dimensional illustration of Sperner lemma
\end{center}
\end{center}

In 2-dimensional case we have a subdivision of a triangle T into triangular cells. A proper coloring of T assigns different colors to the three vertices of T, and inside vertices on each edge of T use only the two colors of the respective endpoints. For a proof of Sperner lemma, see \cite{Cohen}. For the purposes of the paper, we reformulate Sperner lemma in a game form.

\begin{theorem*}[Sperner Lemma in Game Form] Consider the following game. Both players alternately (this is not a necessary condition) color the vertices on $\Delta^n$ simplex  according to the Sperner labeling rule. The player who colors the vertices in such a way that a triangle (triangulation unit) with different colored vertices is formed loses. The game cannot end in a draw.
\end{theorem*}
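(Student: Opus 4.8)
The plan is to show that this game-form statement is merely a repackaging of the classical Sperner Lemma, so that "the game cannot end in a draw" becomes the assertion that when all vertices of $T$ have been colored under the Sperner labeling rule, a fully-labeled (rainbow) cell must exist. First I would make precise what "draw" means: the game ends when every vertex of the triangulation $T$ has received a label, and a draw would be a terminal position in which no triangulation cell has a complete set of labels $\{1,\dots,n+1\}$ and yet no player has lost (i.e.\ no rainbow cell was ever created at any intermediate stage either). Since a rainbow cell, once formed, persists (colors are never changed), the game reaches a terminal all-vertices-colored position without a loser if and only if the final Sperner labeling has \emph{no} rainbow cell at all.

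Next I would invoke the classical Sperner Lemma (stated above): every Sperner labelling of a triangulation of $\Delta^n$ contains at least one cell labelled with the complete set $\{1,2,\dots,n+1\}$. Two small points need checking here. First, the game is constrained so that both players always respect the Sperner labeling rule (a vertex in the relative interior of a face spanned by $\{v_k : k \in S\}$ may only receive a label from $S$); hence any terminal position of the game \emph{is} a genuine Sperner labelling, and the classical lemma applies verbatim. Second, I should argue that play can always be completed to a terminal position — at each move the current player has at least one legal label available for any yet-uncolored vertex, because the Sperner constraint on a face of dimension $d$ still leaves $d+1 \ge 1$ admissible labels. Therefore the game cannot stall: it either produces a rainbow cell mid-play (someone loses) or runs to a full Sperner labelling, which by the lemma contains a rainbow cell — meaning the player who completed that cell lost. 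Either way some player loses, so no draw is possible.

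I would close by noting the converse direction informally for context: if one grants the game-form statement (no draw), then taking \emph{any} Sperner labelling and imagining it produced by the two players shows a rainbow cell must have appeared, recovering the classical lemma; thus the two formulations are equivalent, which is exactly the form needed for the later reduction to Arrow's theorem.

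The main obstacle I anticipate is purely bookkeeping rather than mathematical: one must be careful that "loses the moment a bad triangle is formed" is compatible with the game actually terminating — i.e.\ that a position with no loser yet is always extendable while still obeying the Sperner rule, and that the no-loser terminal positions are exactly the rainbow-free complete labellings. Once that correspondence is nailed down, the statement is an immediate corollary of the classical Sperner Lemma and requires no new combinatorics.
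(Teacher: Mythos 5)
Your proposal is correct and matches the paper's intent: the paper offers no separate argument for the game-form statement, presenting it simply as a reformulation of the classical Sperner lemma, and your derivation (terminal positions are genuine Sperner labellings, rainbow cells persist, legal moves always exist so play cannot stall, hence the classical lemma forces a loser) is exactly the reasoning the paper leaves implicit. Your careful bookkeeping about termination and the characterization of draw positions actually supplies more detail than the paper itself does.
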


\section{From Sperner Lemma to Arrow Theorem and Back}

We construct a social choice game that cannot end in a draw and is equivalent to Sperner Lemma. We will show that Arrow theorem follows from the constructed game and vice versa. Our approach is similar to that used in \cite{T2}.

\subsection{Sperner Lemma Implies Arrow Theorem}

Consider a social choice game on Sperner triangle (on $\Delta^n$ simplex with Sperner labeling) with first player $i \in N$ and second player $N \setminus \{i\}$. The preferences of the first player are represented simply as a set $(\succ_{i})$, while the preferences of the second player are $f(\ldots, (\succ_{i-1}), (\succ_{i+1}), \ldots)$. In other words, the preferences of the second player are social preferences without the preferences of the individual $i$. At the same time, we have social preferences $\mathcal{T}$. The number of vertices inside a simplex is $|X| - 1$. The rules of the game are as follows: we will simply pair-wise compare player preferences with social preferences. Two cases are possible: 

\begin{enumerate}
    \item If the preferences of one of the players coincided with social preferences, while the preferences of the second player did not coincide with them, he colors over one free (from uncolored) vertices;
    \item If the preferences of both players coincided with social preferences, then player $i$ colors over one free (from uncolored) vertices.
\end{enumerate}

Obviously, the social choice game (more precisely, the statement that it cannot end in a draw) is equivalent to our reformulation of Sperner lemma in game form, which means that there is always a winner in it. The presence of a triangle with the proper coloring means the presence of an individual whose preferences coincided with social preferences (the player single-handedly colored all the vertices). In other words, the existence of a triangle with proper coloring implies the existence of a dictator. Therefore, Sperner lemma implies Arrow theorem.

\subsection{Arrow Theorem implies Sperner Lemma}

As we showed earlier, Arrow theorem tells us that there always exists a dictator if Pareto and Independence of Irrelevant Alternatives are satisfied for social choice rule. Therefore, the dictator will color all vertices and will necessarily form a triangle with the proper coloring. This means that Sperner lemma follows from Arrow's theorem.

\subsection{Arrow Theorem is Equivalent to Sperner Lemma}

We have constructed a game of social choice on Sperner triangle (on $\Delta^n$ simplex with Sperner labeling) and formulated a theorem about it, which is equivalent to Sperner's lemma, which, in turn, is equivalent to Sperner's lemma in the usual version. After that, we showed that Arrow's theorem follows from the social choice game theorem and Arrow's theorem follows from the social choice game theorem, which means that Arrow theorem implies Sperner lemma.
\section{Other Equivalent Statements}

According to \cite{Sehie}, which presents statements equivalent to Brouwer theorem, we can establish a number of statements that are equivalent to Arrow theorem.

\clearpage

\begin{theorem*}
These statements are equivalent to Arrow theorem:
\begin{enumerate}
    \item Sperner lemma;
    \item Knaster–Kuratowski–Mazurkiewicz lemma;
    \item Brouwer fixed-point theorem;
    \item Poincaré's theorem;
    \item Bohl's non-retraction theorem;
    \item Caccioppoli's fixed point theorem;
    \item Schauder's fixed point theorem;
    \item Intermediate value theorem of Bolzano-Poincaré-Miranda;
    \item Steinhaus' chessboard theorem;
    \item Fan's geometric or section property of convex sets;
    \item Hartman-Stampacchia's variational inequality;
    \item Horvath-Lassonde's intersection theorem;
    \item Fan's matching theorems;
    \item Yannelis-Prabhakar's existence of maximal elements;
    \item Himmelberg's fixed point theorem;
    \item Scarf's intersection theorem;
    \item Browder's variational inequality;
    \item Tuy's generalization of the Walras excess demand theorem;
    \item Debrunner-Flor's variational equality;
    \item Fan's KKM lemma;
    \item Hukuhara's fixed point theorem.
\end{enumerate}
\end{theorem*}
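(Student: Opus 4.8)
The plan is to prove everything by transitivity, using Brouwer's fixed-point theorem (item~(3)) as a hub and the survey \cite{Sehie} as the source for the spokes. The relation in play is ``equivalent'' in the sense customary for these results --- mutual derivability by elementary (finite-combinatorial together with compactness) arguments, not mere logical equivalence in ZF, which would be vacuous. First I would recall that Section~4 already supplies the link Arrow~$\Longleftrightarrow$~Sperner (item~(1)): the social-choice game on the Sperner triangle cannot end in a draw if and only if Sperner's lemma holds, and a monochromatic properly coloured cell corresponds exactly to a dictator. Next I would invoke the classical equivalence Sperner~$\Longleftrightarrow$~Brouwer (e.g.\ \cite{Y}), which immediately upgrades to Arrow~$\Longleftrightarrow$~Brouwer.

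With Arrow~$\Longleftrightarrow$~Brouwer in hand, the remaining work is a matter of quoting \cite{Sehie}, where each of the items (2) and (4)--(21) --- the KKM lemma, Poincaré's theorem, Bohl's non-retraction theorem, the Bolzano--Poincaré--Miranda intermediate value theorem, Steinhaus' chessboard theorem, Fan's section and matching theorems, the Hartman--Stampacchia and Browder variational inequalities, Scarf's, Himmelberg's, Hukuhara's, Caccioppoli's and Schauder's fixed-point theorems, and the rest --- is shown to be elementarily equivalent to Brouwer's theorem. Composing these equivalences with Arrow~$\Longleftrightarrow$~Brouwer yields Arrow~$\Longleftrightarrow$~(each item). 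In the write-up I would not spell out $21$ pairs of arrows but instead present a single cycle with Brouwer at the centre: Arrow~$\Rightarrow$~Sperner~$\Rightarrow$~Brouwer~$\Rightarrow$~(item~$k$)~$\Rightarrow$~Brouwer~$\Rightarrow$~Sperner~$\Rightarrow$~Arrow, the middle two implications being imported from \cite{Sehie} and the outer ones from Sections~3 and~4.

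The hard part will not be any single implication --- all of them are in the literature --- but rather being honest about the notion of equivalence. I would need to check that the implications borrowed from \cite{Sehie} stay within the same elementary fragment in which our Sperner-based reduction of Arrow lives (so that, for instance, Schauder's theorem is read in its finite-dimensional/approximate form when one wants an equivalence rather than a strictly stronger statement), and, symmetrically, that the game-theoretic argument of Section~4 does not covertly use more than finite combinatorics. Once a fixed collection of admissible proof moves is pinned down, the bookkeeping is routine and the theorem follows; under the informal reading of ``equivalent'' used throughout this area, the theorem is an immediate corollary of Section~4 together with \cite{Sehie}.
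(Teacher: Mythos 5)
Your proposal matches the paper's own argument: the paper likewise treats this theorem as an immediate consequence of the Arrow--Sperner--Brouwer equivalence established in Sections~3--4 together with the survey \cite{Sehie}, which supplies the equivalence of items~(2) and (4)--(21) with Brouwer's theorem. You are in fact more careful than the paper (which gives only a one-sentence citation) in flagging that the notion of ``equivalence'' must be pinned down for the composition of implications to be meaningful, but the route is essentially the same.
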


\paragraph{Acknowledgements.} I thank Alexander Nesterov for valuable discussions. I am also grateful to Gleb Three Days of Rain for the excellent music to which I wrote this paper.

\clearpage

\end{document}